\begin{document} 
\title{Characterization of affine links in the projective space}
\author{Oleg Viro}
\date{}
\maketitle

\section{Introduction}\label{s0}
A {\em link} in the real projective space $\rpt$ is a smooth closed 
1-dimensional submanifold of $\rpt$. As usual, if a link is connected, 
then it is called a {\em knot\/}. A link $L\subset \rpt$ is said to be 
{\em affine} if
it is isotopic to a link, which does not intersect some plane 
$\rpp\subset \rpt$.

The main theorem of this paper  provides necessary and sufficient condition
for a link in $\rpt$ to be affine:

\begin{MTh}\label{MT}
A link $L\subset \rpt$ is affine if and only if 
$\pi_1(\rpt\sminus L)$ contains a non-trivial element of order 2. 
\end{MTh}

\subsection{Known results}\label{s0.1}
The problem of determining whether a link is affine was considered in
literature and there are results in this direction, mostly about necessary
conditions:

{\bf Homology condition.}  Each connected component of a link 
$L\subset\rpt$ realizes a homology class, an element of 
$H_1(\rpt;\ZZ)=\ZZ$.   
All components of an affine link $K\subset\rpt$ realize  
$0\in H_1(\rpt;\ZZ)$.  The converse is not true: there exist knots 
homological to zero in $\rpt$, which are not affine. 
A few examples are shown in Figure \ref{f1}.

{\bf Self-linking number.} If a knot $K\subset\rpt$ realizes 
$0\in H_1(\rpt;\ZZ)$, then a self-linking number $\sl(K)\in\Z$ is 
defined as the linking number modulo 2  of the connected 
components of the preimage $\widetilde K\subset S^3$ of $K$ under the 
covering $S^3\to\rpt$, see \cite{D1}, \S7. 

If $K$ is affine, then $\sl(K)=0$, see \cite{D1}, \S7. 
For the knot $2_1$ shown in Figure \ref{f1}, this invariant equals 2, 
and this is why $2_1$ is not affine.

{\bf Exponents of monomials in the bracket polynomial.} 
If a knot $K\subset \rpt$ is affine, 
then the exponents of all monomials of its bracket polynomial $V_K$ defined 
by Drobotukhina in \cite{D1} are congruent to each other modulo 4. 

\begin{figure}[t]
\centerline{\includegraphics[scale=.75]{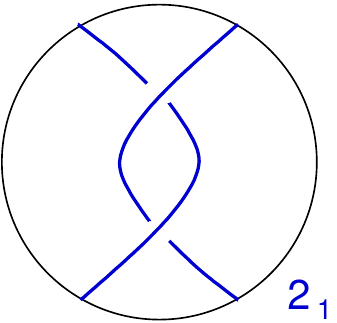}\hspace*{1cm}
\includegraphics[scale=.75]{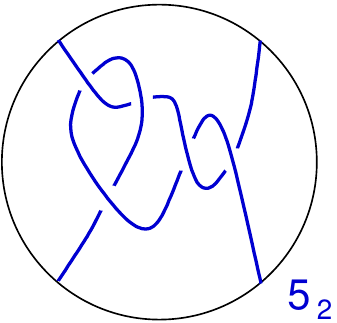}\hspace*{1cm}
\includegraphics[scale=.75]{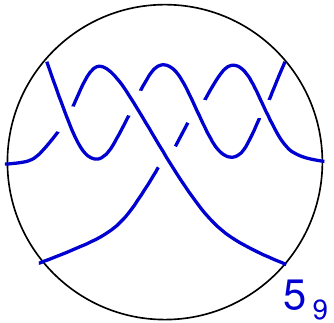}}
\caption{}
\label{f1}
\end{figure} 
The self-linking and exponent conditions are independent: 
$\sl5_2=0$, while $V_{5_2}=A^4+A^2-1-2A^{-2}+A^{-4}+2A^{-6}-2A^{-10}+A^{-14}$, on the other hand, $V_{5_9}=A^{-8}+A^{-12}-A^{-20}$, while $\sl5_9=3$. 
See Figure \ref{f1} and Drobotukhina's table \cite{D2}.  

\subsection{Comparison to Main Theorem}\label{s0.2}
The main theorem of this note provides necessary and sufficient condition
for a link in $\rpt$ to be affine. It is similar to the famous results of
the classical knot theory, like the Dehn-Papakyriacopoulos characterization
of the unknot as the only knot  $K\subset\R^3$ with $\pi_1(\R^3\sminus
K)$ isomorphic to $\Z$.    

However conditions formulated in terms of fundamental groups are not easy
to check. Therefore the known necessary conditions mentioned above may
happen to be more useful for checking if a specific knot is affine. 

\subsection{Reformulation of Main 
Theorem\\ \hspace*{4.4cm} and its conjectural{ generalization}}\label{s0.3}
The property of a projective link of being affine admits the 
following obvious reformulation:\\
{\it A link $L\subset \rpt$ is affine if and only if there exists an
embedding $i:D^3\to\rpt$ such that $L\subset i(D^3)$.} 

This property of links in $\rpt$ admits the following generalization to
links in an arbitrary 3-manifold. A link $L$ in a 3-manifold $M$ is called 
{\sfit localizable\/} if there exists an embedding $i:D^3\to M$ of the ball
$D^3$ such that $L\subset i(D^3)$.

It would be interesting to find a characterization of localizable links. 
The following conjecture would provide a generalization of Main Theorem 
to closed 3-manifolds with trivial $\pi_2$.\medskip

\noindent{\bf Conjecture.} {\it Let $M$ be a closed 3-manifold with
$\pi_2(M)=0$\footnote{As follows from the Papakyriacopoulos sphere theorem, 
this
condition can be reformulated as non-existence of a 2-sphere $\GS$ 
embedded into 
the orientation covering space of $M$ in such a way that $\GS$ does not bound 
a 3-ball there.}. 
Then a link $L\subset M$ is localizable if and only if $\pi_1(M\sminus L)$
contains a subgroup $G$ which is mapped isomorphically onto 
$\pi_1(M)$ by the inclusion homomorphism $\pi_1(M\sminus L)\to\pi_1(M)$.}
\medskip

\section{Proofs}\label{s1}

\subsection{Proof of Main Theorem. Necessity.}\label{s1.1} 
Assume that $L$ is an affine link. Since 
the group $\pi_1(\rpt\sminus L)$ is invariant under isotopy, it suffices
to prove that $\pi_1(\rpt\sminus L)$ contains an element of order 2 if 
$L$ does not intersect a plane $\rpp\subset\rpt$.

If $L\cap\rpp=\empt$, then $L$ is contained in the affine part 
$\rpt\sminus\rpp=\R^3$ of $\rpt$,
and  $\rpt\sminus L$ is a union of $\rpt\sminus(L\cup\rpp)$ and a regular 
neighborhood of $\rpp$. The van Kampen Theorem applied to this presentation 
of $\rpt\sminus L$ implies that  $\pi_1(\rpt\sminus L)$ is the free product
$\ZZ*\pi_1(\R^3\sminus L)$. Hence it contains a  non-trivial element of
order 2.\hfill\qed\bigskip

\subsection{A lemma about a map of the projective plane}\label{s1.2}
The proof of sufficiency is prefaced with the following simple 
homotopy-theoretic lemma: 

\begin{lem}\label{lem1} Let $f:\rpp\to\rpp$ be a map inducing isomorphism
on fundamental group. Then the covering map $\widetilde f:S^2\to S^2$ is
not null homotopic.
\end{lem}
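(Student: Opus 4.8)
The plan is to show that the degree of $\widetilde f$ is odd, hence nonzero, from which the conclusion is immediate: a null homotopic self-map of $S^2$ induces the zero map on $H_2(S^2)$, whereas a map of nonzero degree does not. Throughout, write $p:S^2\to\rpp$ for the universal (double) covering and $a:S^2\to S^2$ for the antipodal involution, so that $p\circ a=p$, and recall that $\widetilde f$ is a lift of $f\circ p$ through $p$, i.e. $p\circ\widetilde f=f\circ p$. Since both the hypothesis on $\pi_1$ and the property of $\widetilde f$ being null homotopic depend only on the homotopy class of $f$, we may assume $f$, and hence $\widetilde f$, smooth.

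Step 1: $\widetilde f$ is \emph{odd}, i.e. $\widetilde f\circ a=a\circ\widetilde f$. Indeed, both $\widetilde f\circ a$ and $a\circ\widetilde f$ are lifts of $f\circ p$ through $p$, so each of them equals $\widetilde f$ or $a\circ\widetilde f$. If $\widetilde f\circ a=\widetilde f$, then $\widetilde f$ is constant on the fibres of $p$ and therefore descends to a map $g:\rpp\to S^2$ with $\widetilde f=g\circ p$; then $f\circ p=p\circ\widetilde f=(p\circ g)\circ p$, and cancelling the surjection $p$ on the right gives $f=p\circ g$. But then $f_*:\pi_1(\rpp)\to\pi_1(\rpp)$ factors through $\pi_1(S^2)=0$, contradicting the hypothesis that it is an isomorphism. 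Hence $\widetilde f\circ a=a\circ\widetilde f$.

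Step 2: $\deg\widetilde f$ is odd. First, $f$ has odd degree modulo $2$: since $f_*$ is an isomorphism on $\pi_1$, it is an isomorphism on $H^1(\rpp;\ZZ/2)$, so $f^*w=w$ for the generator $w$, and naturality of cup products together with $H^*(\rpp;\ZZ/2)=\ZZ/2[w]/(w^3)$ gives $f^*(w^2)=(f^*w)^2=w^2\neq0$, so $f^*$ is an isomorphism on $H^2(\rpp;\ZZ/2)$. To transfer this to $\widetilde f$, pick a regular value $y$ of $f$ and a point $\widetilde y\in p^{-1}(y)$; then $\widetilde y$ is a regular value of $\widetilde f$ (since $p$ is a local diffeomorphism), and the oddness from Step 1 shows that $p$ restricts to a bijection $\widetilde f^{-1}(\widetilde y)\to f^{-1}(y)$. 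Hence $\widetilde f$ and $f$ have the same mod $2$ degree, so $\deg\widetilde f$ is odd; in particular $\deg\widetilde f\neq0$, and $\widetilde f$ is not null homotopic.

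The step I expect to carry the real content is the passage from $f$ to $\widetilde f$. Because $H_2(\rpp;\ZZ)=0$, the only homological handle on a self-map of $\rpp$ is its mod $2$ degree, and the covering projection $p$ induces the zero map on top $\ZZ/2$-homology, so no naive naturality square relates $\deg_2 f$ to $\deg\widetilde f$: the equivariance of $\widetilde f$ (equivalently, the transfer of the cover $p$) is doing essential work. As a shortcut, once Step 1 has produced an odd self-map $\widetilde f$ of $S^2$, one may simply invoke Borsuk's theorem that an odd self-map of a sphere has odd degree and skip the cohomological half of Step 2.
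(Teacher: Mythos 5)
Your proof is correct, but it follows a genuinely different route from the paper's. The paper argues entirely homologically: it writes down the Smith exact sequence (with $\mathbb{Z}/2$ coefficients) for the antipodal involution on $S^2$, observes that the connecting maps identify $H_2(S^2)$ with a copy of $H_*(\rpp)$ on which $f_*$ is known to act isomorphically, and concludes by a diagram chase that $\widetilde f_*$ is an isomorphism on $H_2(S^2;\mathbb{Z}/2)$ --- i.e.\ that $\deg\widetilde f$ is odd, which is exactly your conclusion. The commutativity of that diagram rests on the equivariance $\widetilde f\circ a=a\circ\widetilde f$, which the paper merely asserts and which your Step 1 proves cleanly (the proof is worth having: the case $\widetilde f\circ a=\widetilde f$ really does need to be excluded, and your factorization through $\pi_1(S^2)=0$ is the right way to do it). Your Step 2 replaces the Smith sequence with differential topology: the cup-product structure of $H^*(\rpp;\mathbb{Z}/2)$ forces $f$ to have mod $2$ degree one, and equivariance lets you match preimages of a regular value upstairs and downstairs bijectively, so $\deg_2\widetilde f=\deg_2 f=1$. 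Your closing remark is also accurate: Step 1 plus Borsuk's theorem that an odd self-map of a sphere has odd degree finishes immediately, and the paper's Smith-sequence computation is in essence a proof of that theorem in dimension~$2$. What your version buys is elementarity --- no Smith theory, only regular values and the cohomology ring of $\rpp$ --- at the modest cost of a smooth approximation and the standard identification of the geometric and homological mod-$2$ degrees; the paper's version stays inside algebraic topology and applies verbatim to continuous maps. Both arguments deliver the same strengthened conclusion, that $\deg\widetilde f$ is odd rather than merely nonzero.
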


\begin{proof} Consider the diagram
$$
\minCDarrowwidth8pt
\begin{CD}
0@>>>H_2(\rpp;\ZZ)@>>>H_2(S^2;\ZZ)@>>>H_2(\rpp;\ZZ)@>>>H_1(\rpp;\ZZ)@>>>0\\
@. @VV{f_*}V @VV{\widetilde f_*}V @VV{f_*}V  @VV{f_*}V @. \\
0@>>>H_2(\rpp;\ZZ)@>>>H_2(S^2;\ZZ)@>>>H_2(\rpp;\ZZ)@>>>H_1(\rpp;\ZZ)@>>>0
\end{CD}
$$ 
in which the rows are segments of the Smith sequence (see, e.g., \cite{Bre}) 
for the antipodal involution on $s:S^2\to S^2:x\mapsto-x$. The diagram 
is commutative, because $\widetilde f$ commutes with $s$. 
Notice that all the groups in this diagram are isomorphic to $\ZZ$. 
Exactness of the Smith sequences implies that the middle horizontal 
arrows in both rows are trivial, and the horizontal arrows next to them
are isomorphism. By assumption, the rightmost vertical arrow is an
isomorphism. Therefore the next vertical arrow is an isomorphism. This
isomorphism 
coincides with the homomorphism represented by leftmost vertical arrow.
Hence, the next arrow $\widetilde f_*:H_2(S^2;\ZZ)\to H_2(S^2;\ZZ)$ is 
an isomorphism. Thus $\widetilde f$ is not null homotopic.    
\end{proof}

\subsection{Proof of Main Theorem: Sufficiency}\label{s1.3}
Assume that $\pi_1(\rpt\sminus L)$ contains a
non-trivial element $\Gl$ of order two. Realize $\Gl$ by a smoothly 
embedded loop
$l:S^1\to \rpt\sminus L$. 
\subsubsection{From a loop of order 2 to a singular projective
plane in $\rpt\sminus L$}\label{s1.3.1}
Since $\Gl^2=1$, there exists a
continuous map $D^2\to \rpt\sminus L$ such that its restriction to the
boundary circle $\p D^2$ is the square of $l$. Together with $l$, this 
map gives a continuous map of the projective plane 
$P=D^2/_{x\sim -x, \text{ for } x\in\p D^2}$ to $\rpt\sminus L$. 
%
Denote this map by $g$. 
So, $g:P\to\rpt\sminus L$ is a generic differentiable map 
which induces
a monomorphism $g_*$ of $\pi_1(P)=\ZZ$ to $\pi_1(\rpt\sminus L)$. 

\subsubsection{Singular sphere over the singular projective plane}\label{s1.3.2}  
Let $p:S^3\to\rpt$ be the canonical two-fold covering. Consider its
restriction $S^3\sminus p^{-1}(L)\to\rpt\sminus L$. 
Observe that $\Gl$ does not belong to the group of this covering, because 
otherwise $\pi_1(S^3\sminus p^{-1}(L))$ would contain a non-trivial element 
of order two, which is impossible - a link group does not have any
non-trivial element of finite order. 

Therefore the covering of the projective plane induced from $p$ via $g$
is a non-trivial two-fold covering. Its total space is a 2-sphere. Denote
it by $S$. The map $\widetilde g$ which covers $g$ maps $S$ to    
$S^3\sminus p^{-1}(L)$. 

\subsubsection{Non-contractibility in $S^3\sminus p^{-1}(L)$ of the
singular sphere }\label{s1.3.3}
Let us choose a point $x\in L$. 
%
%
Its complement $\rpt\sminus\{x\}$ is homotopy equivalent to $\rpp$. 
Indeed, the projection from $x$ to any projective plane, which does not 
contain $x$, is a deformation retraction  $\rpt\sminus\{x\}\to\rpp$. 
The composition of
$g:P\to\rpt\sminus L$ with the inclusion $\rpt\sminus L\to \rpt\sminus\{x\}$
induces an isomorphism $\pi_1(P)\to\pi_1(\rpt\sminus\{x\})$. Both spaces, 
$P$ and $\rpt\sminus\{x\}$, have homotopy type of $\rpp$. 
Lemma \ref{lem1} implies that $\widetilde g:S\to S^3\sminus p^{-1}(x)$ 
is not null-homotopic. 

\subsubsection{Existence of a non-singular sphere $\GS_0\subset S^3$ splitting $p^{-1}L$}\label{s1.3.4}
Denote by $\Gs$ the antipodal involution $S^3\to S^3:x\mapsto -x$.

\begin{lem}\label{lem2} There exists a non-singular polyhedral 
submanifold $\GS_0$ of $S^3$
homeomorphic to $S^2$ such that $\GS_0\cap p^{-1}(L)=\empt$ and the two points
of $p^{-1}(x)$ belong to different connected components of
$S^3\sminus\GS_0$.
\end{lem}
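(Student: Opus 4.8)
The plan is to deduce the lemma from the Sphere Theorem, used in the refined form that controls the homotopy class of the embedded sphere. Throughout put $W=S^3\sminus p^{-1}(L)$: it is a connected orientable open $3$-manifold, and after fixing a triangulation of it (a smooth $3$-manifold is triangulable) we may work in the PL category, so the sphere we produce will automatically be polyhedral. Write $\{a,b\}=p^{-1}(x)$ for the two points of $S^3$ lying over $x$; since $x\in L$ we have $\{a,b\}\subset p^{-1}(L)$, hence $W\subset S^3\sminus\{a,b\}$.

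First I would repackage the conclusion of \S\ref{s1.3.3} as a statement about a $\pi_1$-submodule. The inclusion $W\hookrightarrow S^3\sminus\{a,b\}$ induces
\[
\varphi\colon\pi_2(W)\longrightarrow\pi_2\bigl(S^3\sminus\{a,b\}\bigr)\cong\pi_2(S^2)\cong\ZZ ,
\]
the first isomorphism coming from the deformation retraction of $S^3\sminus\{a,b\}$ onto a $2$-sphere. By \S\ref{s1.3.3} the composite $S\xrightarrow{\widetilde g}W\hookrightarrow S^3\sminus\{a,b\}$ is not null-homotopic, so $\varphi(\widetilde g_*[S])\ne0$; in particular $\pi_2(W)\ne0$. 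Since $S^3\sminus\{a,b\}$ is simply connected, $\varphi$ carries the $\pi_1(W)$-action on $\pi_2(W)$ to the trivial action, so $N:=\ker\varphi$ is a $\pi_1(W)$-submodule of $\pi_2(W)$, and it is proper because $\widetilde g_*[S]\notin N$.

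Then I would apply the Sphere Theorem to the orientable $3$-manifold $W$ and the proper $\pi_1(W)$-submodule $N\subsetneq\pi_2(W)$: this produces a PL-embedded $2$-sphere $\GS_0\subset W$ whose class $[\GS_0]\in\pi_2(W)$ does not lie in $N$, i.e.\ $\varphi([\GS_0])\ne0$. Since $\GS_0\subset W$ we get $\GS_0\cap p^{-1}(L)=\empt$ for free. By Alexander's theorem $\GS_0$ bounds a $3$-ball on each of its two sides in $S^3$, so $S^3\sminus\GS_0$ has exactly two components. If $a$ and $b$ lay in the same one, then $\GS_0$ would bound a $3$-ball disjoint from $\{a,b\}$ and hence be null-homotopic in $S^3\sminus\{a,b\}$, contradicting $\varphi([\GS_0])\ne0$. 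So $a$ and $b$ lie in different components of $S^3\sminus\GS_0$, and $\GS_0$ is precisely the submanifold demanded by the lemma.

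The hard part is the appeal to the Sphere Theorem: its bare form --- existence of \emph{some} homotopically essential embedded sphere in $W$ --- is insufficient here, since such a sphere need not separate $a$ from $b$. One must use the strengthening yielding an embedded sphere whose class avoids a prescribed proper $\pi_1$-submodule, applied to $N=\ker\varphi$. Equivalently, one can run the tower construction directly on $\widetilde g$ and use that the resulting embedded sphere $\GS_0$ generates a $\pi_1(W)$-submodule of $\pi_2(W)$ containing $\widetilde g_*[S]$: as $\varphi$ kills the $\pi_1(W)$-action, $\varphi(\widetilde g_*[S])$ is then an integer multiple of $\varphi([\GS_0])$, which forces $\varphi([\GS_0])\ne0$ just as above. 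Granted this control over the class of $\GS_0$, everything else (orientability of $W$, Alexander's theorem, the identification $\pi_2(S^3\sminus\{a,b\})\cong\ZZ$) is routine.
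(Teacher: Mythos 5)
Your proposal is correct and follows essentially the same route as the paper: both apply the refined (Whitehead) form of the Sphere Theorem to $M=S^3\sminus p^{-1}(L)$ with the invariant proper subgroup $\GL=\Ker\bigl(\pi_2(S^3\sminus p^{-1}(L))\to\pi_2(S^3\sminus p^{-1}(x))\bigr)$, using the non-triviality of $\widetilde g$ from \S\ref{s1.3.3} to see that $\GL$ is proper, and then conclude via Alexander's theorem that the resulting essential polyhedral sphere separates the two points of $p^{-1}(x)$. Your explicit verification that $\ker\varphi$ is a $\pi_1$-submodule (via the triviality of the action downstairs) is a small point the paper leaves implicit.
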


\begin{proof} First, let us apply Whitehead's modification \cite{Whi} of 
the Papakyriakopoulos Sphere Theorem \cite{Papa}. 

Recall the statement of this theorem (Theorem (1.1) of \cite{Whi}): 
{\it For any connected,
orientable triangulated 3-manifold $M$ and subgroup $\GL\subset\pi_2(M)$  
which is invariant under the action of $\pi_1(M)$, if $\GL\ne\pi_2(M)$, 
then $M$ contains a non-singular polyhedral 2-sphere which is essential 
$\mod\GL$.} 

We will apply this theorem to $$M=S^3\sminus p^{-1}(L), \; \; 
\GL=\Ker\left(\ink_*:\pi_2(S^3\sminus p^{-1}(L))\to\pi_2(S^3\sminus
p^{-1}(x))\right).$$
We know that $\pi_2(S^3\sminus\{x\})=\pi_2(S^2)=\Z$ and that 
the homotopy class of $\widetilde g$ is non-trivial in 
$\pi_2(S^3\sminus p^{-1}(x))$. Therefore the homotopy class of $\widetilde
g$ does
not belong to $\GL$, and hence $\GL\ne\pi_2(S^3\sminus p^{-1}(L))$. Thus, 
all the assumptions of the Whitehead theorem are fulfilled.  

Let us denote by $\GS_0$  a non-singular polyhedral 2-sphere whose
existence is stated by the Whitehead theorem. By the Alexander Theorem
\cite{Alx}, $\GS_0$ bounds in $S^3$ two domains homeomorphic to ball. 
Since $\GS_0$ is not null homotopic in $S^3\sminus
p^{-1}(x)$, each of these domains contains a point of $p^{-1}(x)$.
\end{proof}

\subsubsection{Improving the splitting sphere }\label{s1.3.5}
\begin{lem}\label{lem3} There exists a smooth submanifold $\GS$ of $S^3$
homeomorphic to $S^2$ such that $\GS\cap p^{-1}(L)=\empt$, the two points
of $p^{-1}(x)$ belong to different connected components of
$S^3\sminus\GS$,
and either $\GS=\Gs(\GS)$ or $\GS\cap\Gs(\GS)=\empt$. 
\end{lem}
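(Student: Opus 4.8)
The goal is to upgrade the splitting sphere $\GS_0$ from Lemma~\ref{lem2} to one that is \emph{equivariant} with respect to the antipodal involution $\Gs$, in the weak sense that $\GS$ is either $\Gs$-invariant or disjoint from its $\Gs$-image. The plan is to run a standard innermost-circle (minimal-intersection) argument on the intersection $\GS_0\cap\Gs(\GS_0)$, using that we are free to modify $\GS_0$ by cut-and-paste surgeries that preserve the two essential properties (disjointness from $p^{-1}(L)$ and separation of the two points of $p^{-1}(x)$), and then invoke the irreducibility of $S^3$ to kill the intersection or make it degenerate into coincidence.

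First I would perturb $\GS_0$ to be smooth and in general position with respect to $\Gs(\GS_0)$, so that $\Gamma:=\GS_0\cap\Gs(\GS_0)$ is a disjoint union of finitely many circles (after also arranging that $\Gs(\GS_0)$ avoids $p^{-1}(L)$, which is automatic since $p^{-1}(L)$ is $\Gs$-invariant). Choose $\GS_0$ in its modification class so that the number of components of $\Gamma$ is minimal. If $\Gamma=\empt$ we are done: either $\GS_0=\Gs(\GS_0)$ or they are disjoint. Otherwise pick a circle $c\subset\Gamma$ that is innermost on $\Gs(\GS_0)$, bounding a disk $D\subset\Gs(\GS_0)$ whose interior meets $\GS_0$ nowhere. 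The curve $c$ also bounds two disks $D',D''$ on $\GS_0$. The key point is that since $S^3$ is irreducible and $\GS_0$ is a sphere, surgering $\GS_0$ along $D$ (replacing one of $D',D''$ by two parallel copies of $D$) produces two spheres $\GS_1',\GS_1''$, and a standard argument shows that each separates $p^{-1}(x)$ or can be discarded: one of the two new spheres still has the two points of $p^{-1}(x)$ on opposite sides. Replacing $\GS_0$ by that one strictly decreases $|\Gamma|$ (the circle $c$ and possibly others are removed, no new ones created after a small isotopy), contradicting minimality. We also need to check the new sphere still avoids $p^{-1}(L)$: the surgery disk $D\subset\Gs(\GS_0)$ is disjoint from $p^{-1}(L)$, and a small pushoff keeps this so.

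The main obstacle is the bookkeeping in the surgery step: when we cut $\GS_0$ along $c$ and reglue with two copies of $D$, we must verify that (i) at least one resulting sphere genuinely separates the two points of $p^{-1}(x)$, and (ii) the intersection with $\Gs(\GS_0)$ strictly drops — the subtlety is that $\Gs(\GS_0)$ is itself being modified simultaneously (since it is the $\Gs$-image), so one must track the symmetric picture and argue that after replacing $\GS_0$ by $\GS_1$ the new intersection $\GS_1\cap\Gs(\GS_1)$ has fewer components than $\Gamma$. I would handle this by choosing $c$ innermost on $\GS_0$ as well (or arguing with the disk $D$ on $\Gs(\GS_0)$ and its $\Gs$-preimage on $\GS_0$), so that the surgery is performed in a small ball disjoint from the rest of $\Gamma$, which guarantees the count decreases by at least the one circle $c$. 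The case $\GS_0=\Gs(\GS_0)$ may also arise as a genuine possibility rather than a contradiction, so the statement is phrased as a disjunction and both alternatives are acceptable outcomes; no further work is needed once $|\Gamma|=0$ is achieved. Finally, a small general-position perturbation restores smoothness of the end result while preserving all properties.
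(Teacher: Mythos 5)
Your overall strategy (innermost circles plus surgery, tracking disjointness from $p^{-1}(L)$ and the separation of $p^{-1}(x)$ homologically) is the same as the paper's, but there is a genuine gap: you never address the case where the innermost circle $C$ is itself invariant under the antipodal involution, $C=\Gs(C)$, and this is exactly the case that produces the first alternative $\GS=\Gs(\GS)$ of the lemma. Note that your closing remark is internally inconsistent: if $|\Gamma|=0$, i.e.\ $\GS_0\cap\Gs(\GS_0)=\empt$, then $\GS_0=\Gs(\GS_0)$ is impossible, so your scheme (minimize $|\Gamma|$ and derive a contradiction whenever $|\Gamma|>0$) amounts to claiming that the disjoint alternative is always achievable. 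Your argument does not establish this. When $C=\Gs(C)$, the disk $D\subset\Gs(\GS_0)$ and its image $\Gs(D)\subset\GS_0$ share the boundary circle $C$, so the surgery region necessarily meets its own $\Gs$-image, and both of your proposed fixes (choosing $C$ innermost on both spheres, or localizing the surgery in a ball disjoint from the rest of $\Gamma$) fail to apply. Concretely, one of the two spheres produced by surgering along $D$ is, up to smoothing a corner, the $\Gs$-invariant sphere $D\cup\Gs(D)$; if that is the one whose class in $H_2(S^3\sminus p^{-1}(x);\ZZ)$ is nontrivial, you do not obtain a separating sphere with strictly fewer intersection circles --- you obtain the invariant sphere, which is a legitimate terminal output of the lemma rather than a contradiction with minimality.

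The paper resolves this by an explicit case split on the innermost circle. If $C\ne\Gs(C)$, the surgery proceeds essentially as you describe, and the homology identity $[\GS']+[\GS'']=[\GS_1]\ne0$ in $H_2(S^3\sminus p^{-1}(x);\ZZ)$ selects a separating component with fewer intersection circles. If $C=\Gs(C)$, one asks whether the embedded $\Gs$-invariant sphere $D\cup\Gs(D)$ separates $p^{-1}(x)$: if it does, smooth the corner along $C$ equivariantly and stop with $\GS=\Gs(\GS)$; if it does not, then $D\cup(\GS_1\sminus\Gs(D))$ does separate, because its class differs from the (trivial) class of $D\cup\Gs(D)$ by the nontrivial class of $\GS_1$, and this sphere has fewer intersection circles with its image, so the induction continues. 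You need to add this case analysis; the remaining parts of your outline (the $\Gs$-invariance of $p^{-1}(L)$ guaranteeing that the surgery disk avoids it, and the homology bookkeeping in the non-invariant case) match the paper's proof.
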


\begin{proof}
Any polyhedral compact surface can be approximated by a smooth
2-submanifold. Let  $\GS_1$ be a smooth submanifold of $S^3$ approximating
$\GS_0$ in $S^3\sminus p^{-1}(L)$. 

The antipodal involution $\Gs:S^3\to S^3$ is an
automorphism of the covering $p:S^3\to\rpt$, therefore $p^{-1}(L)$ is
invariant under $\Gs$ and $\Gs(\GS_1)\subset S^3\sminus p^{-1}(L)$.

Let us assume that $\GS_1$ and $\Gs(\GS_1)$ are transversal -- this can be
achieved by an arbitrarily small isotopy of $\GS_1$. 
Then the intersection
$\GS_1\cap\Gs(\GS_1)$ consists of disjoint circles. 


Take a connected component $C$ of $\GS_1\cap\Gs(\GS_1)$ which is innermost in
$\Gs(\GS_1)$ 
(i.e., which  bounds in $\Gs(\GS_1)$ a disc $D$ containing no other 
components of $\GS_1\cap\Gs(\GS_1)$). 

First, assume that $C\ne \Gs(C)$. In this case, make surgery on $\GS_1$ 
along $D$:
remove a regular neighborhood $N$ of $C$ from $\GS_1$ and attach to $\p N$ 
two discs parallel to $D$. This surgery does not change the homology class 
with coefficients in $\ZZ$ realized by $\GS_1$ in $S^3\sminus p^{-1}(x)$.

Denote by $\GS_2$ the result of this surgery on $\GS_1$. This is a disjoint 
union of two spheres. 
The sum of the homology classes realized by them is the same 
non-trivial element
of $H_2(S^2\sminus p^{-1}(x);\ZZ)=\ZZ$ which was realized by $\GS_1$. 
Therefore, one of the summands is
non-trivial. The corresponding component of $\GS_2$ separates the two points 
of $p^{-1}(x)$. Denote this component by $\GS_3$.
 Since $C\ne\Gs(C)$, the number of connected components of
$\GS_3\cap\Gs(\GS_3)$ is less than the number of connected components of 
$\GS_1\cap\Gs(\GS_1)$, all other properties of $\GS_1$ are inherited by
$\GS_3$, and we are ready to continue with the next connected component
of $\GS_3\cap\Gs(\GS_3)$ which bounds in $\Gs(\GS_3)$ a disc containing 
no other components of $\GS_3\cap\Gs(\GS_3)$).

Second, consider the case $C=\Gs(C)$.  Then the disc $D\subset\Gs(\GS_1)$
together with it's image $\Gs(D)\subset \GS_1$ form an embedded sphere, which
is invariant under $\Gs$ and does not meet the rest of $\GS\cup\Gs(\GS_1)$
besides along $C$, that is
$(D\cup\Gs(D))\cap((\GS_1\sminus\Gs(D)\cup(\Gs(\GS_1)\sminus D))=\empt $. 

If $D\cup\Gs(D)$ separates points of $p^{-1}(x)$, we are done: 
we can smoothen the corner of  $D\cup\Gs(D)$ along $C$ keeping it invariant
under $\Gs$ and take the result for $\GS$. 

If $D\cup\Gs(D)$ does not separate points of $p^{-1}(x)$, then 
$D\cup(\GS_1\sminus\Gs(D))$  separates points of $p^{-1}(x)$ 
(as well as its image under $\Gs$, that is 
$\Gs(D)\cup(\Gs(\GS)\sminus D)$). Indeed, the homology classes realized by 
$D\cup\Gs(D)$ and $D\cup(\GS_1\sminus\Gs(D))$ in $S^3\sminus p^{-1}(x)$ 
differ from each other by the homology class of
$\Gs(D)\cup(\GS_1\sminus\Gs(D))=\GS_1$ which is known to be nontrivial. 
So, if the class of  $D\cup\Gs(D)$ is trivial, then the class of 
$\Gs(D)\cup(\Gs(\GS_1)\sminus D)$ is not. 
Then smoothing of a corner along $C$ turns $D\cup(\GS_1\sminus\Gs(D))$ 
into a new sphere $\GS_2$ such that
$\GS_2\cap\Gs(\GS_2)$ has less connected components than  $\GS_1\cap\Gs(\GS_1)$. 

By repeating this construction, we will eventually build up a sphere 
$\GS\subset S^3\sminus p^{-1}(L)$ with the required properties.
\end{proof}

\subsubsection{Completion of the proof}\label{s1.3.6}
Let us return to the proof of Main Theorem. If the sphere $\GS$
provided by Lemma \ref{lem3} is invariant under $\Gs$, then $\GS$ divides
$S^3$ into two balls which are mapped by $\Gs$ homeomorphically to each
other. Let $B$ be one of them. The part of $p^{-1}(L)$ contained in 
$B$ can be moved by an isotopy fixed on a neighborhood of the boundary 
of $B$ inside an arbitrarily small metric ball in $S^3$. Using $\Gs$,
extend this isotopy to a $\Gs$-equivariant isotopy of the whole $S^3$.
The equivariant isotopy defines an isotopy of $\rpt$ which moves $L$
to a link contained in a small metric ball. This proves that $L$ is
an affine link.

Consider now the case in which the sphere $\GS$ provided by Lemma 
\ref{lem3} is not $\Gs$-invariant, but rather is disjoint from its image
$\Gs(\GS)$. Then spheres $\GS$ and $\Gs(\GS)$ divide $S^3$ into three
domains: two of them are balls bounded by $\GS$ and $\Gs(\GS)$,
respectively. Let us denote by $B$ the ball bounded by $\GS$, then
its image $\Gs(B)$ is bounded by $\Gs(\GS)$. Denote the third domain
by $E$. It is invariant under $\Gs$. 

If one of the points from $p^{-1}(x)$ belonged to $E$, then the other one
also would belong to $E$, and then the sphere $\GS$ would be contractible
in $S^3\sminus p^{-1}(x)$. Therefore $B\cap p^{-1}(L)\ne\empt$. Denote 
$B\cap p^{-1}(L)$ by $K$. This is a sublink of $p^{-1}(L)$.
It  can be moved by an isotopy fixed on a neighborhood of the boundary 
of $B$ inside an arbitrarily small metric ball in $S^3$. Then this isotopy 
can be extended to $\Gs$-equivariant isotopy of $S^3$ fixed on $E$.
This equivariant isotopy defines an isotopy of $\rpt$ which moves $p(K)$
to a link contained in a small metric ball. 

Thus our link $L$ is presented as a disjoint sum of an affine link $p(K)$
and the rest $L\sminus p(K)$ of $L$. 
If $L\sminus p(K)=\empt$, then we are done. If not,
then $\pi_1(S^3\sminus L)$ is presented as a free product of 
$\pi_1(B\sminus K)$ and $\pi_1(\rpt\sminus (L\sminus p(K))$. By the
assumption, the group $\pi_1(S^3\sminus L)$ has a non-trivial element of
order 2. The first factor, $\pi_1(B\sminus K)$ cannot contain such an
element, because this is a group of a classical link. Hence, the second 
factor,  $\pi_1(\rpt\sminus (L\sminus p(K))$, contains it, and we can
apply the constructions and arguments above to the link $L\sminus p(K)$. 
This link contains less
components than the original one, therefore, after several iterations, 
we will come to the 
situation in which $p^{-1}(L)\cap B=\empt$ 
\qed

\end{document}